\newtheorem*{thm}{Theorem}
\newtheorem*{corollary}{Corollary}
\begin{document}

\title[]{Regularizing random points\\ by deleting a few}

\author[]{Dmitriy Bilyk}
\address{School of Mathematics, University of Minnesota, Minneapolis, MN 55455, USA}
\email{dbilyk@umn.edu}

\author[]{Stefan Steinerberger}
\address{Department of Mathematics, University of Washington, Seattle, WA 98195, USA} \email{steinerb@uw.edu}


\begin{abstract} It is well understood that if one is given a set $X \subset [0,1]$ of $n$ independent uniformly distributed random variables, then
$$ \sup_{0 \leq x \leq 1} \left| \frac{\# X \cap [0,x]}{\# X} - x \right| \lesssim \frac{\sqrt{\log{n}}}{ \sqrt{n}} \qquad \mbox{with very high probability.} $$
We show that one can improve the error term by removing a few of the points. For any $m \leq 0.001n$ there exists a subset $Y \subset X$ obtained by deleting at most $m$ points, so that the error term drops from $\sim \sqrt{\log{n}}/\sqrt{n}$ to $ \log{(n)}/m$ with high probability. When $m=cn$ for a small $0 \leq c \leq 0.001$, this achieves the essentially optimal asymptotic order of discrepancy $\log(n)/n$. The proof is constructive and works in an online setting (where one is given the points sequentially, one at a time, and has to decide whether to keep or discard it). A change of variables shows the same result for any random variables on the real line with absolutely continuous density.
\end{abstract}
\maketitle

\section{Introduction and Result}
\subsection{Problem.} Suppose $x_1, \dots, x_n$ are $n$ independent uniformly distributed random variables on the unit interval $[0,1]$. This is one of the most fundamental settings in probability theory which we understand well. In particular, for $0 \leq x \leq 1$
$$ \# \left\{ 1 \leq i \leq n: x_i \leq x \right\} = n \cdot x \pm \mathcal{O}(\sqrt{n}) \qquad \mbox{with high likelihood.}$$
More precisely, the quantity on the left is a binomial random variable $\mathcal{B}(n, x)$ and is well approximated by a Gaussian distribution for large $n$. Moreover, the quantity $ \# \left\{ 1 \leq i \leq n: x_i \leq x \right\} - n x$ behaves, for large $n$, like a (scaled) Brownian bridge. Is it possible to increase the rate at which $n$ randomly sampled points approximate the uniform distribution by deleting a few, say up to $m \leq \varepsilon n$, of the points? 

\begin{center}
\begin{figure}[h!]
        \begin{tikzpicture}
        \node at (0,0) {\includegraphics[width=0.4\textwidth]{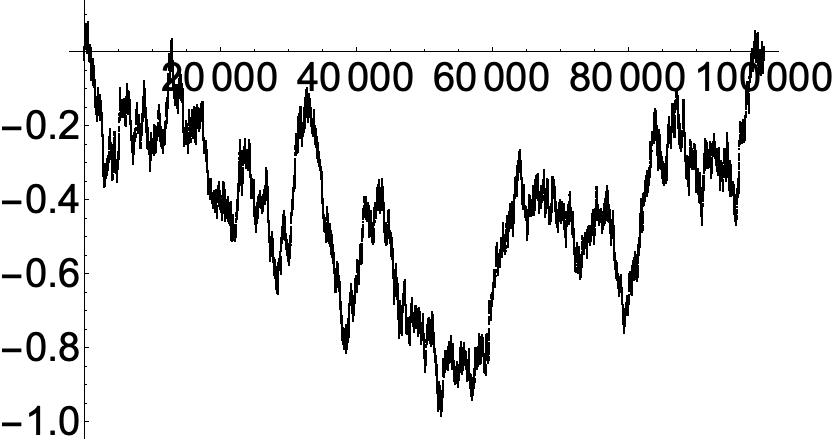}};
        \node at (6,0) {\includegraphics[width=0.4\textwidth]{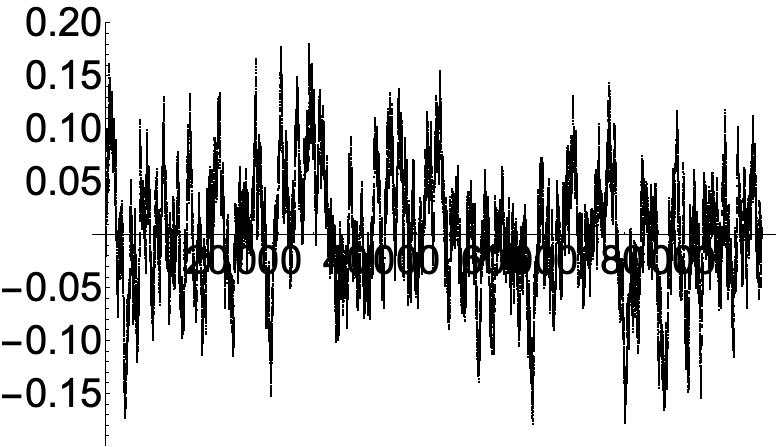}};
    \end{tikzpicture}
    \caption{Left: $n=10^5$ points and $\sqrt{\# X} \cdot (x_i - i/\# X)$. Right: $\sqrt{\# Y} \cdot (y_i - i/\# Y)$ where $Y \subset X$ and $\# Y \geq 0.95 \cdot \# X$. By dropping $5\%$ of the points, one can dramatically increase regularity.}
\end{figure}
\end{center}

\subsection{Result.} 
All results are stated in terms of the Kolmogorov-Smirnov statistic 
\begin{equation}\label{e.KS}
F_n(\left\{ x_1, \dots, x_n \right\}) = \max_{0 \leq x \leq 1} \left| \frac{\# (\left\{x_1, \dots, x_n\right\} \cap [0,x])}{n} - x  \right|.
\end{equation}
It is well-known that if the $x_i$ are $n$ independent uniformly distributed random variables, then
$$ \sqrt{n} \cdot  F_n(\left\{ x_1, \dots, x_n \right\}) \quad  \underbrace{\longrightarrow}_{n \rightarrow \infty} \quad \sup_{0 \leq t \leq 1} |B(t)|,$$
where $B$ is the Brownian bridge on $[0,1]$. This supremum follows the Kolmogorov distribution which is well understood. In particular, we have 
$$ \mathbb{P}\left(\sup_{0 \leq t \leq 1} |B(t)| \geq z \right) = 2 \sum_{k=1}^{\infty} (-1)^{k-1} e^{-2 k^2 z^2}$$
and the likelihood of it attaining large values is very small.
Our result says that by deleting a small sample of size at most $m$, we can decrease the Kolmogorov-Smirnov statistic to $(\log{n})/m$. The result is only interesting when $m \gtrsim \sqrt{n \log{n}}$: if $m$ is smaller,  the original set already has the desired property with high likelihood.
In discrepancy theory, the Kolmogorov-Smirnov statistic \eqref{e.KS} for (deterministic) sequences $( x_k )_{k=1}^\infty$ in $[0,1]$ is known as the \textit{star-discrepancy}. Going back to the seminal work of Weyl \cite{weyl}, it is one of the main objects of study in the field and quantifies the equidistribution of a sequence.

\begin{thm}
    Let $X = \left\{x_1, \dots, x_n \right\}$ be independent uniformly distributed random variables on $[0,1]$. For any $0 \leq m \leq 0.001n$, there exists 
$Y \subset X$ with
  $$ \# Y \geq \# X - m$$ 
    such that, with high likelihood,
    \begin{equation}\label{e.main}
    \max_{0 \leq x \leq 1} \left| \frac{\# Y\cap [0,x]}{\# Y} - x  \right| \leq 100 \frac{\log{n}}{m}.
    \end{equation}
 If $m=cn$ for some $c\in (0,0.001]$, this bound is 
        \begin{equation}\label{e.main1}
 \max_{0 \leq x \leq 1} \left| \frac{\# Y\cap [0,x]}{\# Y} - x  \right| \leq  \frac{100}{c} \frac{\log{n}}{n} \sim \frac{\log \# Y}{\# Y}.\end{equation}
\end{thm}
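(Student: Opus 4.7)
I plan to realize $Y$ as the image of a monotone matching from the $N := n-m$ equispaced targets $t_k := k/N$, $k = 1, \dots, N$, into $X$, so that each $t_k$ is paired to a point $y_k \in X$ with $|y_k - t_k| \le \delta$ for $\delta := 30\log(n)/m$. Granted such a matching, $Y = \{y_1 < \cdots < y_N\}$ satisfies $\#Y = N = \#X - m$, and for any $x \in [y_k, y_{k+1})$ one has $|\#(Y \cap [0,x])/N - x| = |k/N - x| \le \delta + 1/N \le 2\delta \le 100\log(n)/m$, giving \eqref{e.main} and \eqref{e.main1}.

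The existence of the matching will come from Hall's theorem applied in the bipartite graph joining each $t_k$ to $\{x \in X : |x - t_k| \le \delta\}$. Since the $\delta$-neighborhoods of consecutive targets overlap (because $1/N \le 2\delta$), the neighborhood of any consecutive block $\{t_a, \dots, t_b\}$ is simply $X \cap [t_a - \delta, t_b + \delta]$; a short set-theoretic argument that glues overlapping consecutive runs then shows that checking Hall's condition on consecutive blocks alone suffices. Thus it remains to verify, for all $1 \le a \le b \le N$,
\[
\#\bigl(X \cap [t_a - \delta, t_b + \delta]\bigr) \ge b - a + 1.
\]

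The probabilistic input is a \emph{scale-dependent} concentration estimate (the uniform $\sqrt{n\log n}$ Kolmogorov--Smirnov bound is only strong enough for the trivial regime $m \lesssim \sqrt{n\log n}$): a Chernoff bound for $\mathrm{Bin}(n, |I|)$ together with a union bound over intervals with endpoints on a $1/n^2$-grid yields that, with probability at least $1 - n^{-10}$, every subinterval $I \subset [0,1]$ satisfies $\#(X \cap I) \ge n|I| - C\sqrt{n|I| \log n}$ for some absolute constant $C$ (say $C = 10$). Writing $\ell = (b-a)/N + 2\delta$ and $k = b - a + 1$, Hall's condition reduces to $(n\ell - k)^2 \ge C^2 n \ell \log n$; substituting $n\ell - k = m\ell + 2\delta N - 1 \ge m\ell + \delta N$ (valid since $\delta N \ge 30\log n$) and applying AM-GM gives $(m\ell + \delta N)^2 \ge 4 m \delta N \ell = 120\,N\ell\log n$ via the identity $m\delta = 30\log n$. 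This exceeds $C^2 n\ell\log n$ as soon as $N/n \ge C^2/120$, which is comfortably satisfied when $m \le 0.001\,n$.

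The main obstacle is the scale-dependent concentration at the relevant small scales $\ell \sim \delta$, where the expected count $n\delta \sim n\log(n)/m$ is far too small for a blanket $\sqrt{n\log n}$ fluctuation bound to be useful; once that estimate is in place, the Hall verification and the reduction to consecutive blocks are routine algebra. For the online algorithm advertised in the abstract one would replace Hall's theorem by the standard greedy matching for convex bipartite graphs (process the targets in order and pair each to the leftmost still-unused point in its $\delta$-neighborhood), which is known to realize Hall's bound whenever it holds and which can be run on the fly as the points arrive.
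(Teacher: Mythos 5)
Your proposal is correct in substance but takes a genuinely different route from the paper. The paper bins $[0,1]$ into $k$ equal subintervals, thins each bin down to exactly $\lfloor n/k - \lambda\rfloor$ points, observes that the empirical discrepancy vanishes at all endpoints $\ell/k$, and then bounds the fluctuation inside each bin by $k$ independent applications of the Dvoretzky--Kiefer--Wolfowitz inequality plus a union bound. You instead fix the $N = n-m$ equispaced targets $t_k = k/N$ and build $Y$ as a monotone Hall matching of these targets into $X$ within distance $\delta \sim \log(n)/m$, reducing the problem to a one-sided multi-scale Chernoff estimate $\#(X\cap I)\ge n|I| - C\sqrt{n|I|\log n}$ simultaneously over all subintervals $I$. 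Both are constructive and both admit an online implementation (the paper stops accepting points once a bin is full; you run the greedy left-to-right matching). The paper's binning isolates DKW as the single analytic input, whereas your matching trades DKW for a scale-uniform Chernoff bound and a small amount of Hall/convex-bipartite combinatorics; your route has the mild advantage that $\#Y$ is deterministically $n-m$ rather than a random $k\lfloor n/k - \lambda\rfloor$.

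One detail you glossed over: the intervals $[t_a-\delta,\,t_b+\delta]$ arising in the Hall condition can stick out of $[0,1]$ (since $\delta$ typically exceeds $1/N$, this always happens for $a=1$ and $b=N$), so the length to feed into your concentration bound is $\ell' = |\,[t_a-\delta,t_b+\delta]\cap[0,1]\,|$, which is smaller than your $\ell = (b-a)/N + 2\delta$, and the identity $n\ell - k = m\ell + 2\delta N - 1$ does not apply verbatim. The computation still closes: e.g.\ for the left-boundary block $\{t_1,\dots,t_b\}$ one has $\ell' = b/N + \delta$ and, using $n = m + N$, one finds $n\ell' - k = m\ell' + \delta N$ exactly, after which the same AM--GM step $(m\ell' + \delta N)^2 \ge 4m\delta N\ell' = 120\,N\ell'\log n \ge C^2 n\ell'\log n$ goes through unchanged. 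The right-boundary and two-sided cases are handled analogously (the two-sided case is trivial since $\#(X\cap[0,1]) = n \ge N$). So there is no real obstruction, but as written the algebra only covers interior blocks and the boundary cases should be stated explicitly.
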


Here and in the rest of the paper, by ``high probability'', we mean that the probability of the complement is of the order $n^{-C}$. The power $C$ can be made arbitrarily large by adjusting the constants in the arguments.  
The result is optimal up to the value of the constant when $m=0.001n$. Given $n$ i.i.d. uniform random variables, the longest interval $J$ not containing any points has expected size $\sim \log{n}/n$ (see \cite{holst}) and thus no better result is possible.  
Another indication of optimality comes from discrepancy theory: a classical result of Schmidt \cite{schmidt} states that for \textit{any} sequence of points  $( x_k )_{k=1}^\infty$ in $[0,1]$ the inequality $$F_n(\left\{ x_1, \dots, x_n \right\}) \ge C \frac{\log n}{n}  $$ (with an absolute constant $C>0$) holds for infinitely many values of $n$ and the bound is sharp. In general, the bound,  of course, cannot  hold for \textit{all} $n \in N $: indeed, if $\{x_1,\dots,x_n\}$ are equispaced points for some $n$, then $F_n \sim 1/n$, but such behavior cannot be expected of random points. Hence, \eqref{e.main1} states that removing a tiny portion of random points achieves the optimal order of  discrepancy for sequences.
The proof is constructive and can be turned into a procedure which can be carried out in the `online' setting where one is presented with the $n$ random variables $\left\{x_1, \dots, x_n \right\}$ one at a time and has to decide immediately (before seeing any of the remaining points) whether to accept or reject the point. The argument generalizes to real-valued random variables $X$ whose distribution function is absolutely continuous.

\begin{corollary}
    Let $X = \left\{x_1, \dots, x_n \right\}$ be independent real-valued random variables from a common absolutely continuous probability measure with (continuous) cdf $F(x)$. For any $0 \leq m \leq 0.001n$, there exists 
$Y \subset X$ with
    $$ \# Y \geq \# X - m$$ 
    such that, with high likelihood,
    $$\sup_{x \in \mathbb{R}} \left| \frac{\# Y\cap [-\infty,x]}{\# Y} - F(x)  \right| \leq 100 \frac{\log{n}}{m}.$$
\end{corollary}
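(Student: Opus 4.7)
The plan is to reduce the Corollary directly to the Theorem by the probability integral transform. First I set $u_i=F(x_i)$ for $i=1,\ldots,n$. Since $F$ is continuous, the random variables $u_1,\ldots,u_n$ are i.i.d.\ uniform on $[0,1]$. Applying the Theorem to the sample $\{u_1,\ldots,u_n\}$, I obtain a subset $V$ with $\# V \geq n-m$ and, with high probability,
$$ \sup_{0\leq t \leq 1} \left| \frac{\# V \cap [0,t]}{\# V} - t\right| \leq 100 \frac{\log n}{m}. $$
I then pull back: let $I=\{i : u_i\in V\}$ and define $Y=\{x_i : i\in I\}$. Since the $u_i$ are almost surely distinct (as $F$ is continuous), $\# Y=\# V\geq n-m$.

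The heart of the argument is to transport the bound for $V$ back to $Y$. The map $x\mapsto F(x)$ is non-decreasing, so $x_i\leq x$ always implies $u_i\leq F(x)$, giving the inequality $\#\{i\in I : x_i\leq x\}\leq \#\{i\in I : u_i\leq F(x)\}$. The reverse can only fail if some index $i\in I$ satisfies $u_i\leq F(x)$ yet $x_i>x$; this forces $F$ to be constant on $[x,x_i]$, so $x_i$ must lie in a maximal flat region of $F$. There are only countably many such regions, and each is an interval of $F$-measure zero, hence almost surely contains no $x_i$. Consequently, almost surely, for every $x\in \mathbb{R}$,
$$ \# Y\cap(-\infty,x] \;=\; \# V\cap [0,F(x)]. $$

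Combining the two displays, and noting that $F$ takes values in $[0,1]$,
$$ \sup_{x\in\mathbb{R}} \left| \frac{\# Y\cap (-\infty,x]}{\# Y} - F(x) \right| \leq \sup_{0\leq t\leq 1} \left| \frac{\# V\cap [0,t]}{\# V} - t \right| \leq 100\,\frac{\log n}{m}, $$
which is exactly the desired conclusion. The only real obstacle is the almost-sure identification between the $Y$- and $V$-discrepancies when $F$ has flat regions; this is a mild measure-theoretic nuisance rather than a substantive difficulty, since flat regions are null sets for the common law of the $x_i$. In particular, were $F$ strictly increasing, the map $x\mapsto F(x)$ would be an honest bijection between $\mathbb{R}$ and $(0,1)$ and the transfer would be completely transparent; absolute continuity of the measure is more than enough to handle the general case. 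The procedure also stays online: one processes $x_i$ by computing $u_i=F(x_i)$ and then applying the online rule of the Theorem to decide whether to keep $x_i$.
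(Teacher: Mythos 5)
Your proposal is correct and follows essentially the same route as the paper: apply the probability integral transform $u_i = F(x_i)$, invoke the Theorem for the resulting i.i.d.\ uniform sample, and transfer the bound back via the identity $\#\bigl(Y \cap (-\infty, x]\bigr) = \#\bigl(V \cap [0, F(x)]\bigr)$. You are in fact slightly more careful than the paper's one-line proof, explicitly noting that the flat regions of $F$ form a countable union of $F$-null intervals so that almost surely no $x_i$ lands in one, which justifies the transfer identity in full generality.
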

\begin{proof}
If $x_1, \dots, x_n$ are iid samples from a random variable with (continuous) cdf $F(x)$, then $F(x_1), F(x_2), \dots, F(x_n)$ are iid random variables following a uniform distribution on $[0,1]$. We note that
$$\frac{\# (Y\cap [-\infty,x])}{\# Y} = \frac{\#( \left\{F(x_1), \dots, F(x_n) \right\} \cap [0,F(x)])}{\# Y}$$
and from this the result follows.
\end{proof}

\subsection{Related results.} Despite the simplicity of the statement, we were unable to find it in the literature. Our main motivation was a result of Dwivedi, Feldheim, Gurel-Gurevich and Ramdas \cite{dwi} who proved that, given an infinite sequence of i.i.d. uniform random variables $(x_k)_{k=1}^{\infty}$ in the unit cube $[0,1]^d$, there is a way of discarding no more than $50\%$ of the points to reduce the discrepancy (with respect to all axis-parallel rectangles) from $\Theta(\sqrt{ \log \log (n)/n})$ to $\mathcal{O}(\log^{2d+1}{(n)}/n)$. When $d=1$, we are improving the bound from $\mathcal{O}(\log^3{(n)}/n)$ to $\mathcal{O}(\log{(n)}/n)$ which is sharp. \\
Another motivation was a recent result of Cl\'ement, Doerr and Paquete \cite{cle} who showed empirically  (among other things) that starting with a random set of points one can greatly improve the discrepancy by removing relatively few points. This mirrors another numerical observation \cite{bilyk} showing that, again empirically, one can often decrease the discrepancy dramatically by adding a relatively small number of points. Our argument is relatively simple but relies heavily on the one-dimensional setup; it is not at all clear how one would generalize the argument to higher dimensions. One would naturally expect analogous statements of this type to be true. We believe this to be an interesting area for future investigations.

\section{Proof}
\begin{proof}
The proof comes in three steps.

\begin{enumerate}
    \item \textit{Binning.} We split the unit interval into $k$ intervals $J_1, J_2, \dots, J_k$ of length $1/k$ each. We expect that each interval contains approximately $n/k$ points. Large deviation estimates show that, with high probability, each of the interval receives at least $n/k - \lambda$ points where $\lambda \sim \sqrt{ n \log{(n)} /k}$.
    \item \textit{Thinning.} Using $X$ to denote the $n$ random points, we consider each subinterval $J_{\ell}$ and delete $\# (X \cap J_{\ell}) - \left( n/k - \lambda\right)$ points uniformly at random. This leaves us with $k$ intervals each containing exactly $n/k - \lambda$ points. 
    \item \textit{Bounding.} It remains to bound the Kolmogorov-Smirnov statistic of the remaining set. Whenever $x = \ell/k$ for some integer $0 \leq \ell \leq k$, then the discrepancy is 0. In between the intervals, the
    randomness of the construction ensures independence and we can apply the Dvoretzky-Kiefer-Wolfowitz inequality $k$ times in combination with the union bound. 
\end{enumerate}

\begin{center}
    \begin{figure}[h!]
        \centering
        \begin{tikzpicture}
            \node at (0,0) {   \includegraphics[width=0.5\linewidth]{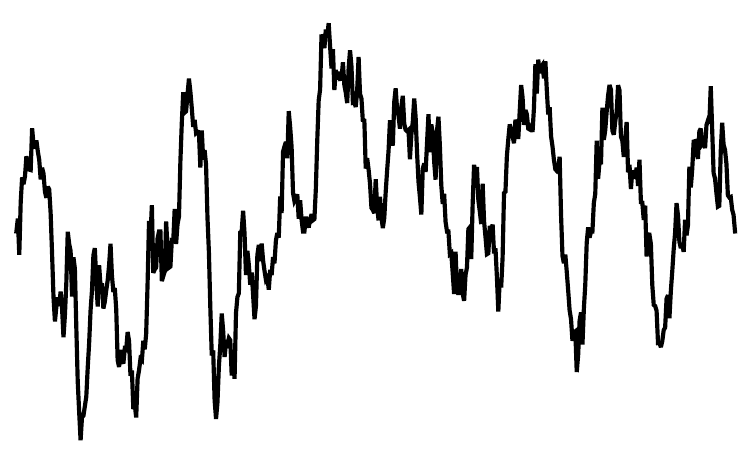}};
            \draw [ultra thick] (-3,-0) -- (3,-0);
            \filldraw (-3, -0) circle (0.15cm);
           \filldraw (-3+1.2, -0) circle (0.15cm);
            \filldraw (-3+2*1.2, -0) circle (0.15cm);
             \filldraw (-3+3*1.2, -0) circle (0.15cm);
              \filldraw (-3+4*1.2, -0) circle (0.15cm);
               \filldraw (-3+5*1.2, -0) circle (0.15cm);
        \end{tikzpicture}    
        \caption{The idea behind the proof visualized: by binning and thinning, the distribution function can be rewritten as the concatenation of independent distribution function that achieve discrepancy 0 at equispaced intervals. The continuum limit are concatenated Brownian bridges.}
        \label{fig:enter-label}
    \end{figure}
\end{center}

\subsection{ Binning.}
We partition $[0,1]$ into $k$ intervals
$$ [0,1] = \bigcup_{\ell=1}^{k} J_{\ell} \qquad \qquad \mbox{where} \quad J_{\ell} = \left[ \frac{\ell -1}{k}, \frac{\ell}{k} \right)$$
and $J_{k} = [(k-1)/k, 1]$. Here, $1 \leq k \leq 0.001n/\log{n}$ is a parameter to be optimized over later. Given $n$ identically and uniformly distributed random variables, the number of points
expected to land in each interval is $n/k$. We pick a parameter $\lambda > 0$ and will start by showing that for a suitable $\lambda$ (depending on $n$ and $k$), every single one of the $k$ interval receives at least $n/k - \lambda$ points with high probability. The number of points that land in a fixed interval is a Bernoulli random variable $\mathcal{B}(n,p)$ where $ p = 1/k$. We can write such a Bernoulli random variable as
$$X = X_1 + \dots + X_n$$ 
where the $n$ random variables $X_i \in \left\{0,1\right\}$ are independent and satisfy
$$ \mathbb{P}(X_i = 1) = \frac{1}{k} \qquad \mbox{and} \qquad  \mathbb{P}(X_i = 1) = 1-\frac{1}{k}.$$
We use the following form of the (multiplicative) Chernoff bound \cite[Theorem 4.5]{mitz}: if $X_1, \dots, X_n$ are independent random variables taking values in $\left\{0,1\right\}$ and $X$ denotes their sum and $\mu = \mathbb{E}X$, then, for any $0 < \delta < 1$,
$$ \mathbb{P}\left( X \leq (1-\delta)\mu \right) \leq \exp \left(-\frac{\delta^2 \mu}{2}\right).$$
Applying this bound with $\delta = \lambda/(n/k)$, we obtain
\begin{align*}
    \mathbb{P}\left(X \leq \frac{n}{k} - \lambda\right) &\leq \exp\left( -\frac{\lambda^2}{2n(n/k)} \right) =\exp\left( -\frac{\lambda^2 k}{2n} \right).
\end{align*}
In particular, for 
$$ \lambda = \sqrt{\frac{10 n \log{n}}{k}},$$
we have
$$ \mathbb{P}\left(X \leq \frac{n}{k} - \lambda\right) \leq \frac{1}{n^{5}}$$
from which we deduce with the union bound and $k \leq n $
$$ \mathbb{P}\left(\exists~k:  \# X \cap J_k \leq \frac{n}{k} - \lambda\right) \leq \frac{1}{n^4}.$$
We want to make sure that we never discard too many points, we always want to ensure that
$$ \lambda \leq \frac{1}{10} \frac{n}{k} \qquad \mbox{which is implied by} \qquad k \leq \frac{1}{1000} \frac{n}{\log{n}}.$$

\subsection{Thinning.}
Let us assume now that $X = \left\{x_1, \dots, x_n\right\}$ is a set of $n$ independently and uniformly distributed points with the property that
$$ \forall~ 1 \leq \ell \leq k \qquad \qquad \# (X \cap J_{\ell}) \geq \frac{n}{k} - \lambda,$$
where $\lambda = \sqrt{10 n \log{(n)} / k}$ as above (as we just demonstrated this property holds with high likelihood). We select a subset $Y \subset X$ as follows: for each interval $J_{\ell}$ we remove a fixed number of points until the number of points in the interval is exactly $\left\lfloor n/k - \lambda \right\rfloor$.  
More precisely, in each interval $J_{\ell}$ we select a random subset $X \cap J_{\ell}$ of size $\# (X \cap J_{\ell}) - \left\lfloor (n/k - \lambda)\right\rfloor$ and 
remove them. This random procedure is independent across different intervals.
This leaves us with a total of
\begin{equation*}\label{e.M} M =  \# Y = k \left \lfloor n/k - \lambda \right\rfloor \geq n - k \lambda -k  = n - \sqrt{10 n k \log{n} } -k 
\end{equation*}
points. As already mentioned before, the assumption $k \leq 0.001 n/\log{n}$ ensured that the number of remaining points is large, say $M \geq n/2$. We also observe that, by construction, each of the $k$ intervals now has the exact same number of points and, for each $1 \leq \ell \leq k$,
$$ \# \left(Y \cap \left[0, \frac{\ell}{k} \right]\right) = \frac{\ell}{k} \cdot \# Y,$$
therefore, for $x= \ell/k$, we have $$\frac{\# Y\cap [0,x]}{\# Y} - x = 0.$$

\subsection{Bounding.}
The purpose of this section is to now derive an upper bound for 
$$ \mbox{the quantity} \quad \max_{0 \leq x \leq 1} \left| \frac{\# Y\cap [0,x]}{\# Y} - x  \right|.$$
By construction, the quantity is 0 whenever $x$ is the right endpoint of any of the $k$ intervals. This suggests, for $0 < x < 1$ arbitrary, the decomposition
$$ x= \frac{\ell}{k} + b \qquad \mbox{where} \quad \ell \in \mathbb{N} \quad \mbox{and}\quad  0 \leq b < \frac{1}{k}.$$
Then
\begin{align*}
 \# (Y \cap \left[0, x \right]) - x \cdot \#Y &=  \# \left(Y \cap \left[\frac{\ell}{k}, b \right]\right) - \left( x - \frac{\ell}{k}\right) \cdot \# Y \\
 &= \# \left(Y \cap \left[\frac{\ell}{k}, b \right]\right) - b \cdot \# Y.
 \end{align*}
We observe that this can be interpreted as a random variable that only depends on the distribution of points in the subinterval $J_{\ell}$ and is independent of all the other subintervals. The question simplifies to having $k$ (independent) copies of the following problem: given a set $Z \subset [0,1/k]$ of $\# Z = M/k$ independently and uniformly distributed random points in $[0,1/k]$, what can we say about
\begin{equation}\label{e.residue} \mbox{the size of the quantity} \quad \sup_{0 \leq x \leq \frac{1}{k}} \left| \frac{1}{M} \left( \# Z \cap [0,x] \right) -  x \right| \quad ?\end{equation}
A change of variables shows that, using the notation $kZ = \left\{kz: z \in Z \right\}$ to refer to the rescaled set. Since $Z$ is a set of $M/k$ uniformly distributed random points in $[0,1/k]$, the set $kZ$ is a set of $M/k$ uniformly distributed random points in $[0,1]$. We compute, for $0 \leq x \leq 1/k$,
\begin{align*}
    \frac{1}{M} \left( \# Z \cap [0,x] \right) -  x &= \frac{1}{k} \left(    \frac{k}{M} \#\left(  Z \cap [0,x] \right) - k  x \right) =  \frac{1}{k} \left(    \frac{k}{M} \# \left( kZ \cap [0,kx] \right) -  k  x \right)\\
        &=  \frac{1}{k} \left(    \frac{ \# \left( kZ \cap [0,kx] \right)}{M/k} -  k  x \right) = \frac{1}{k} \left(   \frac{  \# \left(kZ \cap [0,kx] \right)}{\# kZ} -  k  x \right)\\
        &= \frac{1}{k} \left(   \frac{ \#  \left( kZ \cap [0,y] \right)}{\# kZ} -  y \right).
\end{align*}
This can be seen as a self-improved argument: we are back at the original problem, but with a smaller number of points, namely $\# kZ = M/k$, and an additional factor of $1/k$ in front.
 The Dvoretzky-Kiefer-Wolfowitz inequality states that, for a set of $Z$ of $n$ independent and uniformly distributed random variables on $[0,1]$
$$ \mathbb{P}\left( \sup_{0 \leq x \leq 1} \left| \frac{1}{n} \left( \# Z \cap [0,x] \right) -  x \right| > \varepsilon \right) \leq 2 e^{-2 n \varepsilon^2}.$$
This inequality was first discovered \cite{dvor} with an implicit constant $C>0$ on the right-hand side. It was later shown in \cite{massart} that the sharp constant is $C=2$ verifying a conjecture of Birnbaum-McCarty \cite{BM}. Therefore, in our setting, with $Z \subset [0,1/k]$ of $\# Z = M/k$ points and with the above rescaing, we have that
$$ \mathbb{P} = \mathbb{P} \left( \sup_{0 \leq x \leq \frac{1}{k}} \left| \frac{1}{M} \left( \# Z \cap [0,x] \right) -  x \right| > \varepsilon\right)$$
can be written as
\begin{align}
\mathbb{P} = \mathbb{P} \left( \sup_{0 \leq x \leq 1} \left|  \frac{\# kZ \cap [0,x]}{\# kZ} -  x \right| > k\varepsilon\right) \leq 2 e^{-2 M k \varepsilon^2}.
\end{align}
Setting
$$ \varepsilon = 10 \frac{\sqrt{\log{(Mk)}}}{\sqrt{Mk}},$$
we see that
$$ \mathbb{P}\left( \max_{0 \leq x \leq \frac{1}{k}} \left| \frac{1}{M} \left( \# Z \cap [0,x] \right) -  x \right| \geq   10 \frac{\sqrt{\log{(Mk)}}}{\sqrt{Mk}}\right) \leq 2 (Mk)^{-200}. $$
Using that $k \geq 1$ and $M \geq n/2$ together with the union bound, we arrive at
$$ \mathbb{P}\left( \max_{0 \leq x \leq 1} \left| \frac{\# Y\cap [0,x]}{\# Y} - x  \right| \geq   10 \frac{\sqrt{\log{(Mk)}}}{\sqrt{Mk}} \right) \leq \frac{c}{n^{100}}.$$

\subsection{Conclusion.}
We conclude by summarizing the preceding steps. We have, by deleting points in each of the $k$ intervals, constructed a set $Y$
with
$$ M  = \#Y \geq n  - \sqrt{10 nk \log{n}} -k  \geq \frac{n}{2}   \qquad \mbox{points}.$$
The previous subsection showed that this set $Y$ is fairly regular and satisfies
$$ \mathbb{P}\left( \max_{0 \leq x \leq 1} \left| \frac{\# Y\cap [0,x]}{\# Y} - x  \right| \geq   10 \frac{\sqrt{\log{(Mk)}}}{\sqrt{Mk}} \right) \leq \frac{c}{n^{100}}.$$
Using $Mk \leq n^2$, we can simplify this algebraically a little bit: since $\sqrt{\log{(Mk)}} \leq 2 \sqrt{\log{n}}$, we can conclude that with very high likelihood
$$  \max_{0 \leq x \leq 1} \left| \frac{\# (Y \cap [0,x]}{\# Y} - x \right| \leq  20 \frac{\sqrt{\log{(n )}}}{\sqrt{nk}}.$$
It remains to algebraically rephrase this is a little in terms of the number of discarded points. Recall that $m =  k\lambda = \sqrt{10 nk \log{(n)} }$ is an upper bound on the number of discarded points. Since $k \geq 1$, we have
 $m \geq \sqrt{10 n \log{n}}$.
Since we also require $k \leq 0.001n/\log{n}$ (to ensure that many points are left over, i.e. $M \geq n/2$), we have
 $m \leq 0.1n$. With these bounds, we can further rewrite the upper bound on the Kolmogorov-Smirnov statistic  
$$  20 \frac{\sqrt{\log{(n )}}}{\sqrt{nk}} = 20 \frac{\sqrt{\log{(n )}}}{m} \sqrt{10 \log{n}} \leq 80 \frac{\log{n}}{m}.$$
This proves  the desired statement \eqref{e.main} in the regime $m \geq \sqrt{10 n \log{n}}$. Observe that inequality \eqref{e.main1} corresponds to taking $k\sim n/\log(n)$.
To finish the proof of the main Theorem, it remains to deal with the case when $m \leq \sqrt{10 n \log{n}}$. In this  case we aim to remove at most $m$ points to achieve discrepancy
$$  100 \frac{\log{n}}{m} \geq 100 \frac{\log{n}}{ \sqrt{10 n \log{n}} } \geq 10 \frac{\sqrt{\log{n}}}{\sqrt{n}}.$$ 
This can be done by removing no points: the Dvoretzky-Kiefer-Wolfowitz inequality implies that the original set already achieves this with high probability.
\end{proof}

\textbf{Remark.} One can think of the proof as constructive, indeed, the procedure can be carried out in an online setting: knowing $n$ and $m \in [\sqrt{10 n \log{n}}, 0.1n] $  in advance (in the other regime there are no points to discard) allows us to compute $k$ and $\lambda$. We then simply stop accepting a new random point $x_i \in J_{\ell}$ as soon as the total number of variables already in $J_{\ell}$ reaches $\lceil n/k - \lambda \rceil$. \\

\textbf{Remark.} The binning procedure can  also  be viewed as being equivalent to a variant of the so-called \textit{jittered sampling}, see e.g. \cite{PauSt}: indeed, it amounts to dividing the domain $[0,1]$ into $k$ equal intervals and placing $n- k\lambda$ independent random points into each of them. However, this point of view would not simplify the proof. In most situations (which are less delicate than ours) when this construction is applied, the residual (boundary) terms are estimated using large deviation, e.g. Chernoff bounds. In the present case, this essentially amounts to answering the question posed in \eqref{e.residue}, which seems to require the full power of the Dvoretzky-Kiefer-Wolfowitz inequality.\\

\textbf{Acknowledgments.} The authors were supported by the NSF DMS-2054606 (DB) and DMS-2123224 (SS). They are grateful to the  Soci\'et\'e nationale des chemins de fer fran\'cais (SNCF) for track maintenance between Mommenheim and Sarreguemines (Saargem\"und) which led to the initial idea and Dagstuhl Conference 23351 during which it was worked out.

\end{document}